\numberwithin{equation}{section}
\newtheorem{theorem}{Theorem}[section]
\newtheorem{example}[theorem]{Example}
\journal{?}
\begin{document}

\begin{frontmatter}

\title{An inverse problem in estimating the time
dependent source term and initial temperature simultaneously
 by  the polynomial regression and conjugate gradient method}

\author{Arzu Erdem Co\c{s}kun}
\address{erdem.arzu@gmail.com}
\address{Kocaeli University, Faculty of Arts and Sciences, Department of Mathematics}
\address{Umuttepe Campus, 41380, Kocaeli - TURKEY}

\begin{abstract}
From the final and interior temperature measurements identifying the source term with initial temperature simultaneously is an inverse heat conduction problem which is a kind of ill-posed. The optimal control framework has been found to be effective in dealing with these problems. However, they require to find the gradient information. This idea has been employed in this research. We derive the gradient of Tikhonov functional and establish the stability of the minimizer from the necessary condition. The stability and effectiveness of evolutionary algorithm are presented for various test examples.
\end{abstract}

\begin{keyword}
Inverse source problem, parabolic problem, Tikhonov functional, polynomial regression, conjugate gradient method

\end{keyword}

\end{frontmatter}


\section{Introduction}
In recent years, several researchers have reported for the solvability of the parabolic inverse problems of finding a solution with an unknown right-hand side. For instance, the inverse source problem when the final and (or) integral overdetermination condition are given have been studied in \cite{BadiaDuongHamdi,JohanssonLesnic,KozhanovSafiullova,YanFuDou,YanYangFu,YangDengYuLuo}. Borukhov and Vabishchevich \cite{BorukhovVabishchevich} have reconstructed the right-hand side of a parabolic equation using a solution specified at internal points. Abasheeva \cite{Abasheeva} has proved the existence and uniqueness of the solution to the inverse source problem. Ashyralyev, Erdogan and Demirdag \cite{AshyralyevErdoganDemirdag} have presented a stable difference schemes of first and second orders of accuracy for the inverse source problem. A numerical method by employing a new idea of fictitious time integration method for backward heat conduction problem has been given by Chang and Liu \cite{ChangLiu}. The unknown initial condition for a parabolic  system has been concerned by Tadi \cite{Tadi}.  Masooda K. and Yousufa \cite{MasoodaYousufa} have proposed a class of numerical schemes based on positivity-preserving Pad\'{e} approximations to solve initial inverse problems in the heat equation. The initial temperature and a boundary coefficient have been simultaneously determined from the final overdetermination and a priori known in a small sub-domain in \cite{ChoulliYamamoto}. Erdem \cite{Erdem}  study two inverse problems relating to reconstruction of the diffusion coefficient $k(x)$, appearing in a linear partial parabolic equation $u_t=(k(x)u_x)_x$ concerned through overposed data $ u(x,T)$ and non-local boundary condition $\int_0^T u(x,t)dt$. An inverse source problem of identification of $F(t)$ function in the linear parabolic equation $u_t= u_{xx} + F(t)$ and $u_0(x)$ function as the initial condition from the measured final data and local boundary data based on the optimal control framework by Green's function has been presented in \cite{Azizbayov,Erdem1,Polidoro,XuYang}. Source terms $w:=\{F(x,t),p(t)\}$ in the linear parabolic equation $u_t= u_{xx} + F(x,t)$ and Robin boundary condition $-u_x(l,t)= \nu [u(l,t) - p(t)]$ from the  measured final data and the measurement of the temperature in a subregion has been investigated in \cite{Erdem2} based on the minimization of Tikhonov functional.

We consider one-dimensional heat conduction specified by the differential equation:
\begin{equation}
\frac{\partial u}{\partial t}=\frac{\partial ^{2}u}{\partial x^{2}}%
+F(t),~~(x,t)\in (0,L)\times (0,t_{f}),  \label{1_e}
\end{equation}
where $u(x,t)$ describes  the temperature field depending on the spacewise variable $x$ and the time $t$, $F(t)$ is the internal heat source, $L$ and $t_{f}$ are given positive constants . The initial temperature are set as
\begin{equation}
u(x,0)=u_{0}(x),~~x\in (0,L),  \label{1_i}
\end{equation}%
and two boundary conditions are defined:
\begin{equation}
u(0,t)=u(L,t)=0,~~t\in (0,t_{f}].  \label{1_b}
\end{equation}

When the internal heat source $F(t)$ and initial temperature $u_0(x)$ are given, the problem (\ref{1_e})-(\ref{1_b}) is referred as  the direct problem. The problem of identifying the unknown internal heat source $F(t)$ and initial temperature $u_0(x)$ through the following additional information is considered as the inverse problem:
\begin{eqnarray}
u_{f}(x) &=&u(x,t_{f}),~~x\in (0,L),  \label{2} \\
u^{\ast }(t) &=&u(x^{\ast },t),~~t\in (0,T],  \label{2_2}
\end{eqnarray}
where $x^*$ is an interior point.

By the method of separation of variables, the solution of (\ref{1_e})-(\ref{1_b})
can be written as
\begin{eqnarray*}
u(x,t)=\int_{0}^{L}
u_0(\xi)G(x,\xi,t)d\xi+\int_0^tF(\tau)H(x,t-\tau) d\tau,\label{4g}
\end{eqnarray*}
where the form of the Green's function is given by
\begin{eqnarray*}
G(x,\xi,t)=\frac{2}{L}\sum_{n=1}^\infty \sin(\lambda_n
x) \sin(\lambda_n \xi)\exp(-\lambda_n^2 t),\label{5g}
\end{eqnarray*}
with $\lambda_n=\frac{n\pi}{L}$ and the function $H(x,t)$ is
expressed in terms of Green's function as
\begin{eqnarray*}
H(x,t)=\int_{0}^{L}
G(x,\xi,t)d\xi=\frac{4}{L}\sum_{n=1}^\infty
\frac{1}{\lambda_{2n-1}}\sin(\lambda_{2n-1} x)
\exp(-\lambda_{2n-1}^2 t),\label{6g}
\end{eqnarray*}

For any given $u_0(x)\in H^{2+l}(0,L),~~f(t)\in
H^{l/2}(0,T),~\alpha \in (0,1)$ and the consistency condition
$u_0(0)=u_0(L)=0$ is satisfied, there exists a unique solution
$u(x,t)\in H^{2+l,1+l/2}([0,L]\times [0,T])$, to the
problem (\ref{1_e})-(\ref{1_b}), \cite{Ladyzhenskaya}.

In this paper,  the unknown internal heat source $F(t)$ and initial temperature $u_0(x)$ are represented a simplified version of the polynomial regression model:
\begin{eqnarray}
F(t) &=&\sum_{k=1}^{N_{t}}\phi _{k}t^{k-1},~0\leq t\leq t_{f},  \label{3} \\
u_{0}(x) &=&\sum_{m=1}^{N_{x}}\theta _{m}x^{m-1},~0\leq x\leq L \label{3_2}
\end{eqnarray}
We aim to find the parameters $\boldsymbol{\phi}=[\phi_1,\phi_2,...,\phi_{N_t}]^T $ and $\boldsymbol{\theta}=[\theta_1,\theta_2,...,\theta_{N_x}]^T $ which response best matches the recorded data. The solution of present inverse problem is to be sought in such a way that the Tikhonov objective function $S_\alpha(\boldsymbol{\phi},\boldsymbol{\theta})$ is minimized:
\begin{eqnarray}
S_{\alpha }(\boldsymbol{\phi },\boldsymbol{\theta }) &=&\sum_{i=1}^{I_{x}}%
\left[ u_{f}(x_{i})-u(x_{i},t_{f};\boldsymbol{\phi },\boldsymbol{\theta })%
\right] ^{2}+\sum_{j=1}^{I_{t}}\left[ u^{\ast }(t_{j})-u(x^{\ast },t_{j};%
\boldsymbol{\phi },\boldsymbol{\theta })\right] ^{2}  \nonumber \\
&&+\alpha \sum_{i=1}^{I_{x}}\left[ \sum_{m=1}^{N_{x}}\theta _{m}x_{i}^{m-1}%
\right] ^{2}+\alpha \sum_{j=1}^{I_{t}}\left[ \sum_{k=1}^{N_{t}}\phi _{k}t_{j}^{k-1}\right] ^{2}  \label{4}
\end{eqnarray}
where $\alpha>0$ is the regularization parameter.

Here, we present an efficient solution for the inverse problem (\ref{1_e})-(\ref{2_2}). It combines the iterative-type and Tikhonov regularization methods. The solution method aims to minimize the objective functional (\ref{4}). Further, the gradient of (\ref{4}) is defined, explicitly. Then the Conjugate Gradient method is proposed to solve resulting minimization problem.

The paper is organized as follows. In Section 2, the gradient  of the Tikhonov objective functional has been obtained.  The necessity condition has been presented for the stability  of solutions in the presence of measurement noise in Section 3. In Section 4, we design the algorithm and the obtained theoretical results will be tested practically in terms of numerical experiments.

\section{ Mathematical relations}
The analytical solution of (\ref{1_e})-(\ref{1_b}) is obtained as
\begin{eqnarray}
u(x,t) &=&\int_{0}^{L}u_{0}(\xi )G(x,\xi ,t)d\xi +\int_{0}^{t}F(\tau )H(x,t-\tau )d\tau   \label{5}
\end{eqnarray}
where Green's function with eigenvalue $\lambda _{n}=n\pi /L$ is given by%
\begin{equation}
G(x,\xi ,t)=\frac{2}{L}\sum_{n=1}^{\infty }\sin \left( \lambda _{n}x\right) \sin \left( \lambda _{n}\xi \right) \exp \left( -\lambda _{n}^{2}t\right) \label{6}
\end{equation}%
The functions $H$ is represented in terms of Green's
function given by%
\begin{gather}
H\left( x,t\right) =\int_{0}^{L}G(x,\xi ,t)d\xi =\frac{4}{L}%
\sum_{n=1}^{\infty }\frac{1}{\lambda _{2n-1}}\sin (\lambda _{2n-1}x)\exp (-\lambda _{2n-1}^{2}t),  \label{7}
\end{gather}%
By substituting (\ref{3}) and (\ref{3_2}) into the analytical solution at
the measurement points we have%
\begin{gather}
u(x,t_{f};\boldsymbol{\phi },\boldsymbol{\theta })=\sum_{m=1}^{N_{x}}\theta _{m}\int_{0}^{l}\xi ^{m-1}G(x,\xi ,t_{f})d\xi +\sum_{k=1}^{N_{t}}\phi
_{k}\int_{0}^{t_{f}}\tau ^{k-1}H(x,t_{f}-\tau )d\tau   ,  \label{10} \\
u(x^{\ast },t;\boldsymbol{\phi },\boldsymbol{\theta })=\sum_{m=1}^{N_{x}}%
\theta _{m}\int_{0}^{l}\xi ^{m-1}G(x^{\ast },\xi ,t)d\xi +\sum_{k=1}^{N_{t}}\phi _{k}\int_{0}^{t}\tau ^{k-1}H(x^{\ast },t-\tau )d\tau
 ,  \label{11}
\end{gather}%
and differentiating the above result with respect to $\phi _{k}$ and $\theta _{m}$ we obtain the following expression for the sensitivity coefficients for
the parameters $\phi _{k},k=1,2,...,N_{t}$ and $\theta _{m},m=1,2,...,N_{x}:$%
\begin{gather}
J_{m}^{1,1}\left( x\right) =\frac{\partial u(x,t_{f};\boldsymbol{\phi },%
\boldsymbol{\theta })}{\partial \theta _{m}}=\int_{0}^{l}\xi ^{m-1}G(x,\xi
,t_{f})d\xi ,  \label{12} \\
J_{m}^{2,1}\left( t\right) =\frac{\partial u(x^{\ast },t;\boldsymbol{\phi },%
\boldsymbol{\theta })}{\partial \theta _{m}}=\int_{0}^{l}\xi ^{m-1}G(x^{\ast
},\xi ,t)d\xi ,  \label{13} \\
J_{k}^{1,2}\left( x\right) =\frac{\partial u(x,t_{f};\boldsymbol{\phi },%
\boldsymbol{\theta })}{\partial \phi _{k}}=\int_{0}^{t_{f}}\tau
^{k-1}H(x,t_{f}-\tau )d\tau ,  \label{14} \\
J_{k}^{2,2}\left( t\right) =\frac{\partial u(x^{\ast },t;\boldsymbol{\phi },%
\boldsymbol{\theta })}{\partial \phi _{k}}=\int_{0}^{t}\tau ^{k-1}H(x^{\ast },t-\tau )d\tau .  \label{15}
\end{gather}%
\begin{figure}[H]
  \begin{subfigure}[b]{0.55\textwidth}
          \centering
          \includegraphics[width=\textwidth]{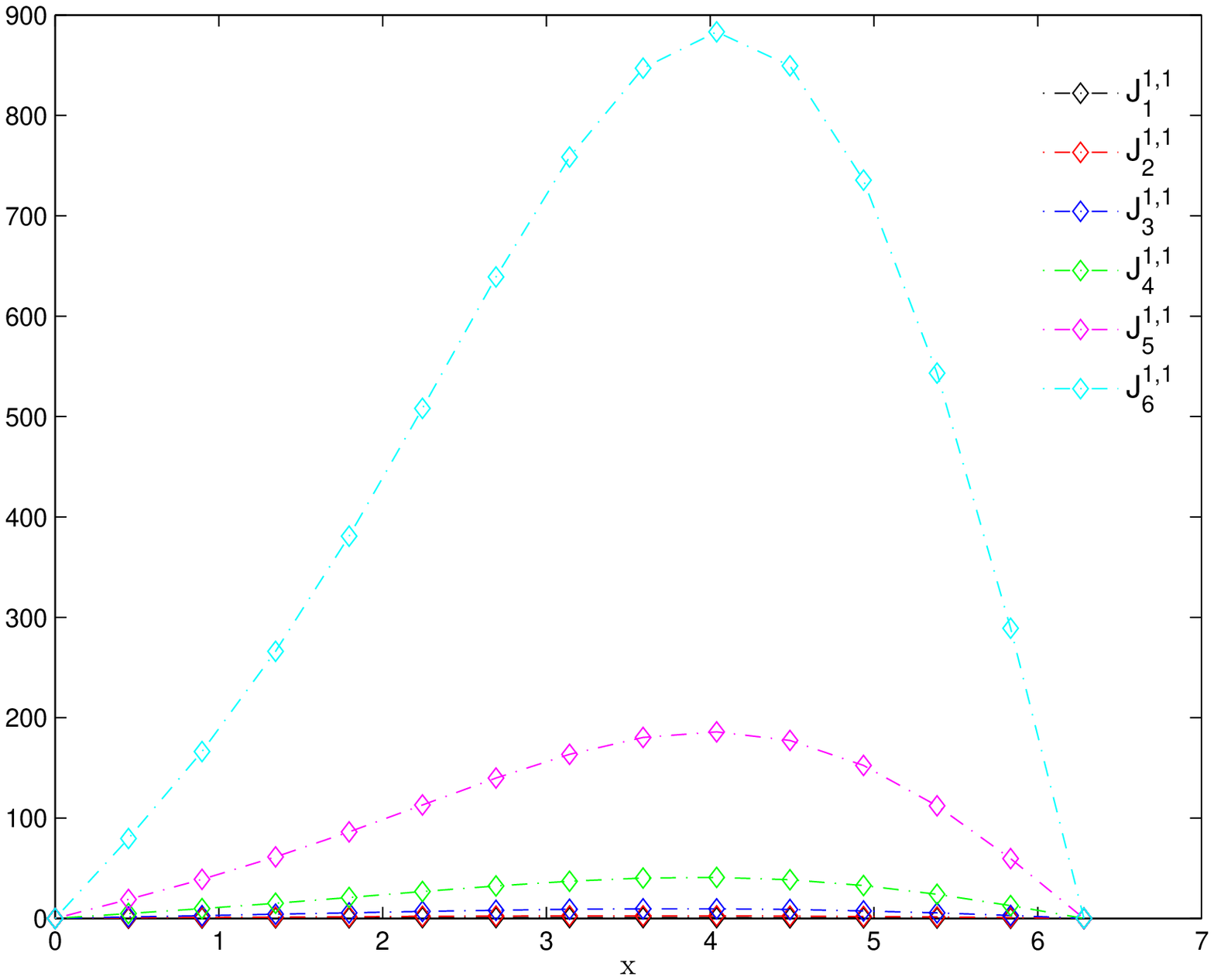}
          \caption{}{\label{J:fig1-a}}
  \end{subfigure}%
  \begin{subfigure}[b]{0.55\textwidth}
          \centering
          \includegraphics[width=\textwidth]{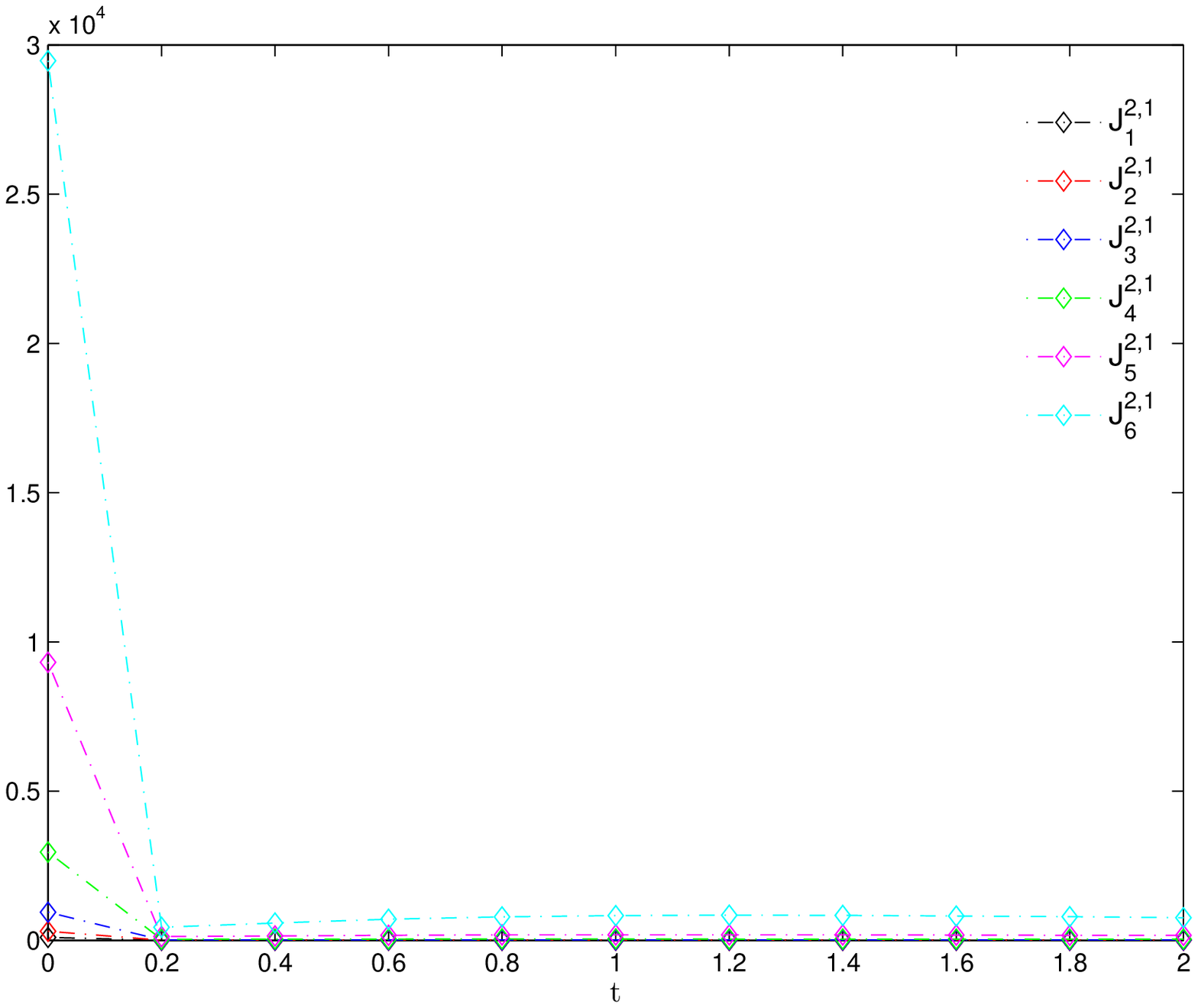}
          \caption{}{\label{J:fig1-b}}
  \end{subfigure}\\
  \begin{subfigure}[b]{0.55\textwidth}
          \centering
          \includegraphics[width=\textwidth]{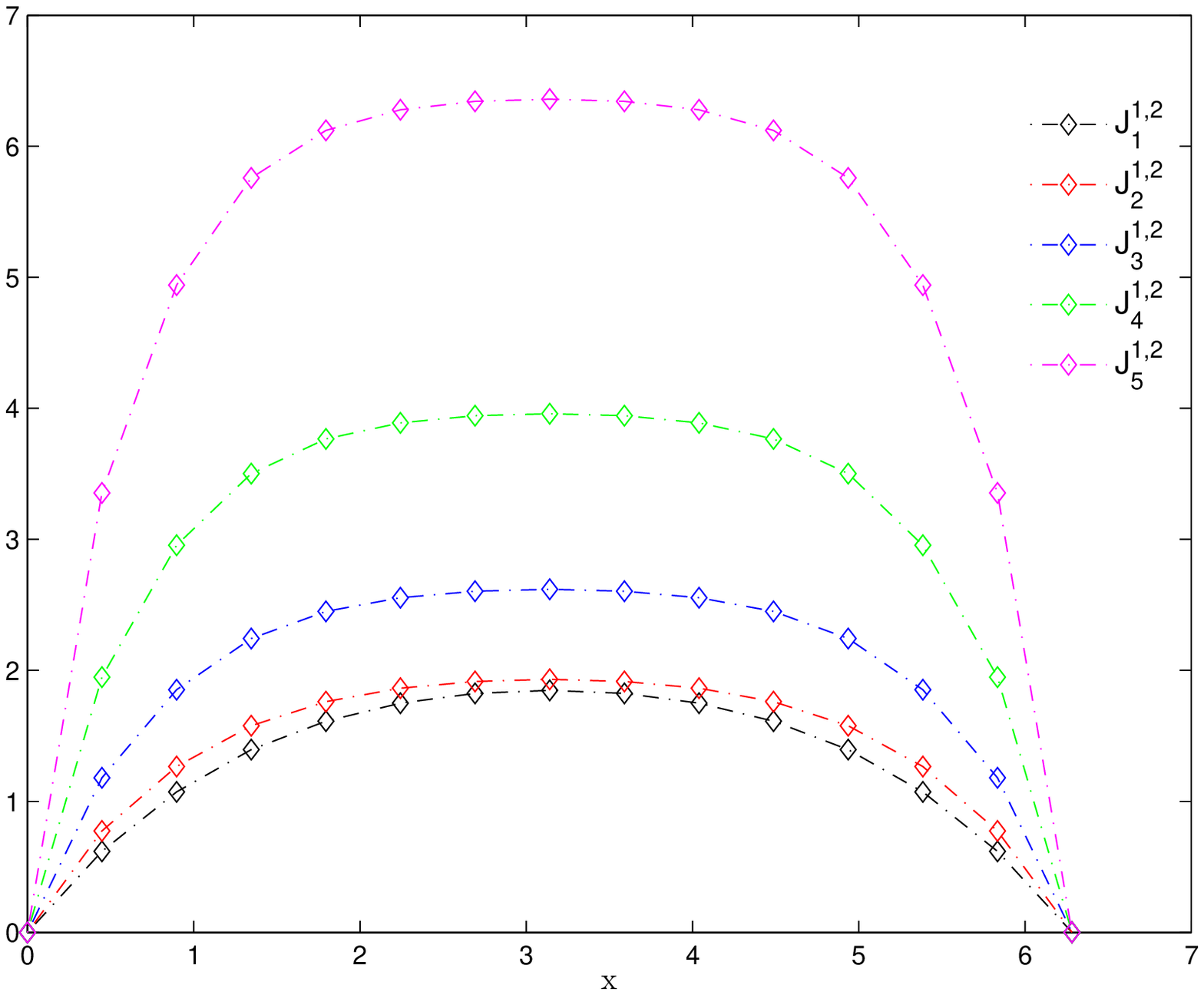}
          \caption{}{\label{J:fig1-c}}
  \end{subfigure}%
  \begin{subfigure}[b]{0.55\textwidth}
          \centering
          \includegraphics[width=\textwidth]{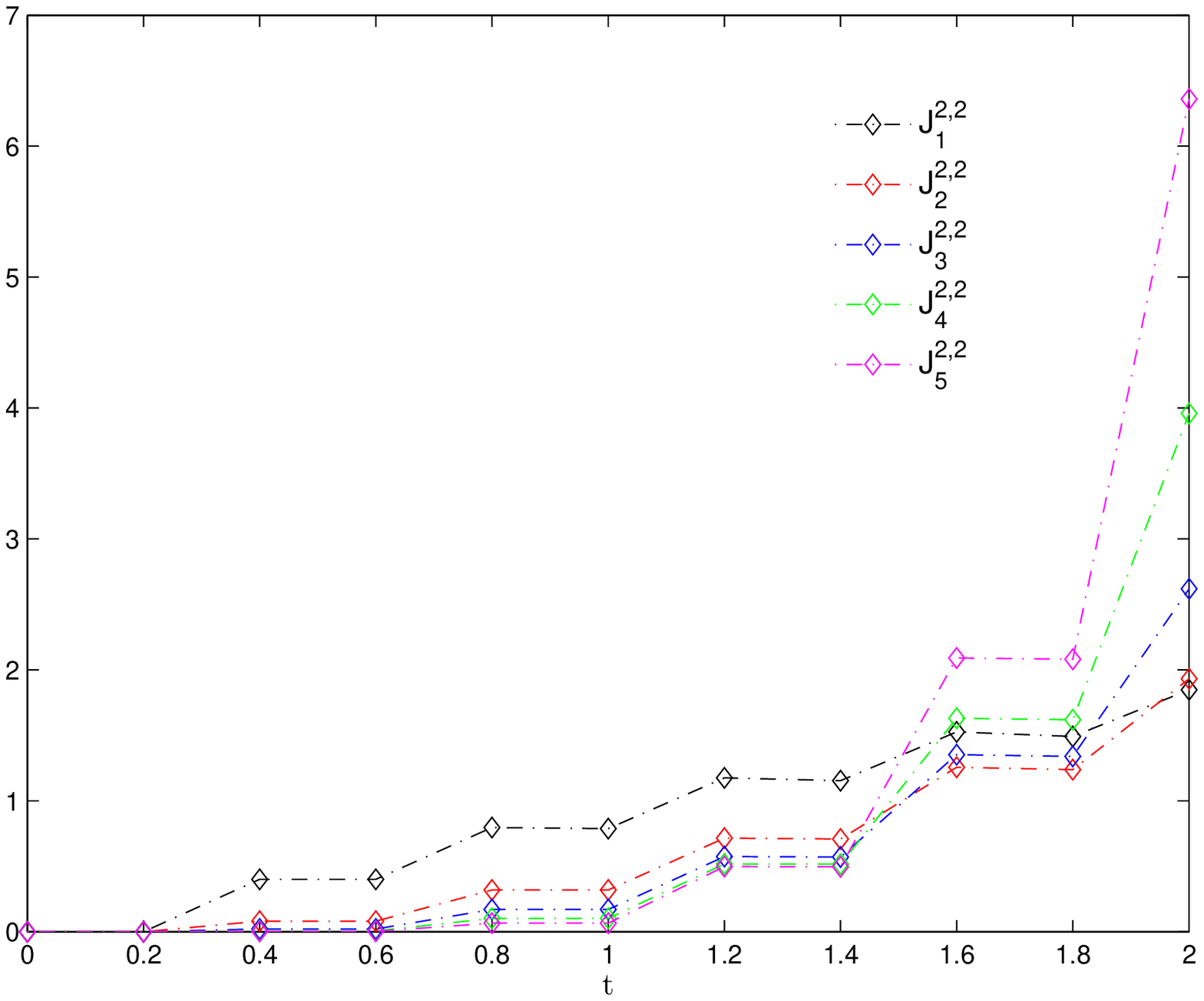}
          \caption{}{\label{J:fig1-d}}
  \end{subfigure}
  \caption{(\subref{J:fig1-a}) shows the sensitivity coefficient
  $J_{m}^{1,1}\left( x\right),~m=1,2,3,4,5,6$;\\
  (\subref{J:fig1-b}) shows the sensitivity coefficient
  $J_{m}^{2,1}\left( t\right),~m=1,2,3,4,5,6$;\\
  (\subref{J:fig1-c}) shows the sensitivity coefficient
  $J_{k}^{1,2}\left( x\right),~k=1,2,3,4,5$;\\
  (\subref{J:fig1-d}) shows the sensitivity coefficient $J_{k}^{2,2}\left( t\right),~k=1,2,3,4,5$ where $L=2\pi,~t_f=2,~x^*=\pi$}
  \label{J:fig1}
\end{figure}
By differentiating equation (\ref{4}) with respect to the unknown parameters $\theta _{m},m=1,2,...,N_{x}$ and $\phi _{k},k=1,2,...,N_{t}$ we obtain
\begin{gather}
\frac{\partial S_{\alpha }(\boldsymbol{\phi },\boldsymbol{\theta })}{%
\partial \phi _{k}}=-2\sum_{i=1}^{I_{x}}\left[ u_{f}(x_{i})-u(x_{i},t_{f};%
\boldsymbol{\phi },\boldsymbol{\theta })\right] J_{k}^{1,2}(x_{i})  \notag \\
-2\sum_{j=1}^{I_{t}}\left[ u^{\ast }(t_{j})-u(x^{\ast },t_{j};\boldsymbol{%
\phi },\boldsymbol{\theta })\right] J_{k}^{2,2}\left( t_{j}\right) +2\alpha
\sum_{j=1}^{I_{t}}t_{j}^{k-1}\left[ \sum_{k=1}^{N_{t}}\phi _{k}t_{j}^{k-1}%
\right]  \label{16} \\
\frac{\partial S_{\alpha }(\boldsymbol{\phi },\boldsymbol{\theta })}{%
\partial \theta _{m}}=-2\sum_{i=1}^{I_{x}}\left[ u_{f}(x_{i})-u(x_{i},t_{f};%
\boldsymbol{\phi },\boldsymbol{\theta })\right] J_{m}^{1,1}(x_{i})  \notag \\
-2\sum_{j=1}^{I_{t}}\left[ u^{\ast }(t_{j})-u(x^{\ast },t_{j};\boldsymbol{%
\phi },\boldsymbol{\theta })\right] J_{m}^{2,1}\left( t_{j}\right) +2\alpha
\sum_{i=1}^{I_{x}}x_{i}^{m-1}\left[ \sum_{m=1}^{N_{x}}\theta _{m}x_{i}^{m-1}%
\right]  \label{17}
\end{gather}%
Then from $\left( \ref{16}\right) ,\left( \ref{17}\right) $\ the expression
of the gradient of the cost function (\ref{4}) is%
\begin{eqnarray}
\nabla _{\phi }S_{\alpha }(\boldsymbol{\phi },\boldsymbol{\theta }) &=&\left[
\frac{\partial S_{\alpha }(\boldsymbol{\phi },\boldsymbol{\theta })}{%
\partial \phi _{1}},...,\frac{\partial S_{\alpha }(\boldsymbol{\phi },%
\boldsymbol{\theta })}{\partial \phi _{N_{t}}}\right]  \label{16_1} \\
\nabla _{\boldsymbol{\theta }}S_{\alpha }(\boldsymbol{\phi },\boldsymbol{%
\theta }) &=&\left[ \frac{\partial S_{\alpha }(\boldsymbol{\phi },%
\boldsymbol{\theta })}{\partial \theta _{1}},...,\frac{\partial S_{\alpha }(%
\boldsymbol{\phi },\boldsymbol{\theta })}{\partial \theta _{N_{x}}}\right] \label{17_1}
\end{eqnarray}

\section{Stability Analysis}

The next lemma gives a necessity condition for stabilization of the
minimizer of the functional $S_{\alpha }(\boldsymbol{\phi },\boldsymbol{%
\theta })$ given by (\ref{4}).

\begin{theorem}
Let $\boldsymbol{\phi }^{\ast },\boldsymbol{\theta }^{\ast }$ be the optimal
solution for $S_{\alpha }(\boldsymbol{\phi },\boldsymbol{\theta })$ and $%
u^{\ast }\left( x,t\right) $ be a solution corresponding to these optimal solution. If for any $\boldsymbol{\phi },\boldsymbol{\theta }$, $v\left(
x,t\right) $ is the solution of the following problem:%
\begin{equation}
\left\{
\begin{array}{l}
\frac{\partial v}{\partial t}=\frac{\partial ^{2}v}{\partial x^{2}}%
+\sum_{k=1}^{N_{t}}\left( \phi _{k}-\phi _{k}^{\ast }\right)
t^{k-1},~~(x,t)\in (0,L)\times (0,t_{f}) \\
v(x,0)=\sum_{m=1}^{N_{x}}\left( \theta _{m}-\theta _{m}^{\ast }\right)
x^{m-1},~~x\in (0,L), \\
v(0,t)=v(L,t)=0,~~t\in (0,t_{f}].%
\end{array}%
\right. ,  \label{stab1}
\end{equation}%
then we have the following estimates:%
\begin{gather}
2\alpha \left( \sum_{i=1}^{I_{x}}\sum_{m=1}^{N_{x}}\theta _{m}\left( \theta _{m}-\theta _{m}^{\ast }\right) x_{i}^{2m-2}+\sum_{j=1}^{I_{t}}\sum_{k=1}^{N_{t}}\phi _{k}\left( \phi
_{k}-\phi _{k}^{\ast }\right) t_{j}^{2k-2}\right) \geq   \notag \\
2\sum_{i=1}^{I_{x}}\left[ u_{f}(x_{i})-u(x_{i},t_{f};\boldsymbol{\phi }%
^{\ast },\boldsymbol{\theta }^{\ast })\right] v\left( x_{i},t_{f}\right)
+\sum_{j=1}^{I_{t}}\left[ u^{\ast }(t_{j})-u(x^{\ast },t_{j};\boldsymbol{%
\phi }^{\ast },\boldsymbol{\theta }^{\ast })\right] v\left( x^{\ast },t_{j}\right) .  \label{stab2}
\end{gather}
\end{theorem}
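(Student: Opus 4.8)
The plan is to combine the linearity of the forward model with the first-order optimality condition satisfied by the minimizer $(\boldsymbol{\phi}^*,\boldsymbol{\theta}^*)$, letting the explicit gradients \eqref{16}--\eqref{17} do most of the work.

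First I would identify $v$ concretely. Since the heat problem \eqref{1_e}--\eqref{1_b} is linear and the unknowns enter the source and the initial datum affinely through \eqref{3}--\eqref{3_2}, the map $(\boldsymbol{\phi},\boldsymbol{\theta})\mapsto u(\cdot,\cdot;\boldsymbol{\phi},\boldsymbol{\theta})$ is affine. Subtracting the system solved by $u(\cdot;\boldsymbol{\phi}^*,\boldsymbol{\theta}^*)$ from the one solved by $u(\cdot;\boldsymbol{\phi},\boldsymbol{\theta})$ turns the source into $\sum_k(\phi_k-\phi_k^*)t^{k-1}$, the initial datum into $\sum_m(\theta_m-\theta_m^*)x^{m-1}$, and leaves the boundary data homogeneous --- precisely the problem \eqref{stab1}. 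By uniqueness $v=u(\cdot;\boldsymbol{\phi},\boldsymbol{\theta})-u(\cdot;\boldsymbol{\phi}^*,\boldsymbol{\theta}^*)$, so that $v(x_i,t_f)=\sum_m(\theta_m-\theta_m^*)J_m^{1,1}(x_i)+\sum_k(\phi_k-\phi_k^*)J_k^{1,2}(x_i)$ and $v(x^*,t_j)=\sum_m(\theta_m-\theta_m^*)J_m^{2,1}(t_j)+\sum_k(\phi_k-\phi_k^*)J_k^{2,2}(t_j)$ by the sensitivity identities \eqref{12}--\eqref{15}.

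Next I would invoke optimality. As $(\boldsymbol{\phi}^*,\boldsymbol{\theta}^*)$ minimizes the convex quadratic functional $S_\alpha$ over all parameters, the directional derivative along $(\boldsymbol{\phi}-\boldsymbol{\phi}^*,\boldsymbol{\theta}-\boldsymbol{\theta}^*)$ vanishes: $\sum_k(\phi_k-\phi_k^*)\,\partial_{\phi_k}S_\alpha(\boldsymbol{\phi}^*,\boldsymbol{\theta}^*)+\sum_m(\theta_m-\theta_m^*)\,\partial_{\theta_m}S_\alpha(\boldsymbol{\phi}^*,\boldsymbol{\theta}^*)=0$. Inserting \eqref{16}--\eqref{17} and regrouping, the data-fidelity pieces are exactly the residuals at the optimum paired against the sensitivity coefficients times $(\theta_m-\theta_m^*)$ and $(\phi_k-\phi_k^*)$; by the identities of the previous paragraph these collapse into $2\sum_i[u_f(x_i)-u(x_i,t_f;\boldsymbol{\phi}^*,\boldsymbol{\theta}^*)]\,v(x_i,t_f)+2\sum_j[u^*(t_j)-u(x^*,t_j;\boldsymbol{\phi}^*,\boldsymbol{\theta}^*)]\,v(x^*,t_j)$, i.e. the right-hand side of \eqref{stab2}. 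The regularization pieces of \eqref{16}--\eqref{17}, paired against the same differences, supply the $\alpha$ terms on the left. The vanishing directional derivative then yields \eqref{stab2} as an equality, which a fortiori gives the stated inequality; alternatively one reaches the inequality directly from $S_\alpha(\boldsymbol{\phi},\boldsymbol{\theta})\ge S_\alpha(\boldsymbol{\phi}^*,\boldsymbol{\theta}^*)$ by expanding the squares with $u(\cdot;\boldsymbol{\phi},\boldsymbol{\theta})=u(\cdot;\boldsymbol{\phi}^*,\boldsymbol{\theta}^*)+v$ and keeping track of the nonnegative quadratic-in-$v$ remainder $\sum_i v(x_i,t_f)^2+\sum_j v(x^*,t_j)^2$.

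The step I expect to be the main obstacle is the algebra of the regularization contribution. Differentiating $\big[\sum_m\theta_m x_i^{m-1}\big]^2$ faithfully produces the cross terms $\sum_{m,m'}\theta_{m'}^*(\theta_m-\theta_m^*)x_i^{m+m'-2}$, whereas \eqref{stab2} is written in the diagonal form $\sum_m\theta_m(\theta_m-\theta_m^*)x_i^{2m-2}$ (and similarly in $\phi$ and $t$); reconciling these two presentations, while keeping the factor of $2$ consistent across the final-time sum and the interior sum, is the delicate bookkeeping on which the clean statement rests. Once $v$ is correctly identified and the gradient substituted, the remaining manipulations are routine.
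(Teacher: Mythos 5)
Your proposal is essentially the paper's own argument. The paper moves along the segment $(\boldsymbol{\phi}^{\gamma},\boldsymbol{\theta}^{\gamma})=(1-\gamma)(\boldsymbol{\phi}^{\ast},\boldsymbol{\theta}^{\ast})+\gamma(\boldsymbol{\phi},\boldsymbol{\theta})$, invokes the one-sided first-order condition $\left.dS_{\alpha}(\boldsymbol{\phi}^{\gamma},\boldsymbol{\theta}^{\gamma})/d\gamma\right\vert_{\gamma=0}\geq 0$, and sets $v=\left.\partial u^{\gamma}/\partial\gamma\right\vert_{\gamma=0}$, which by linearity is exactly your difference $u(\cdot\,;\boldsymbol{\phi},\boldsymbol{\theta})-u(\cdot\,;\boldsymbol{\phi}^{\ast},\boldsymbol{\theta}^{\ast})$; your vanishing-gradient (equality) version is just the unconstrained form of the same optimality condition and yields the inequality a fortiori.

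The one caveat concerns the step you flagged as the main obstacle, and you were right to flag it: that reconciliation cannot be carried out, and the defect lies in the paper, not in your computation. Faithful differentiation of the penalty gives the cross-term form $2\alpha\sum_{i}\sum_{m,m'}\theta_{m'}^{\ast}\left(\theta_{m}-\theta_{m}^{\ast}\right)x_{i}^{m+m'-2}$, with weights $\theta^{\ast}$ because the derivative is evaluated at $\gamma=0$, whereas the theorem and the paper's proof write the diagonal form $2\alpha\sum_{i}\sum_{m}\theta_{m}\left(\theta_{m}-\theta_{m}^{\ast}\right)x_{i}^{2m-2}$ with weights $\theta$; the paper's proof simply asserts this diagonal expression as the value of the derivative, with no justification. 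The two expressions differ by off-diagonal terms of arbitrary sign, so (\ref{stab2}) as printed does not follow from the optimality condition. What does follow is the cross-term version, with either weight: with $\theta^{\ast}$ it holds by your equality argument, and replacing $\theta^{\ast}$ by $\theta$ only adds the nonnegative square $\left[\sum_{m}(\theta_{m}-\theta_{m}^{\ast})x_{i}^{m-1}\right]^{2}$, preserving the inequality. (There is also a minor mismatch of constants: the proof produces a factor $2$ on both residual sums, while the right-hand side of (\ref{stab2}) carries $2$ only on the first.) In short, your route is the paper's route, your algebra is the correct version of it, and the theorem should be read with the cross-term penalty, at which point your argument completes the proof.
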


\begin{proof}
For any $\boldsymbol{\phi },\boldsymbol{\theta }, \gamma\in(0,1)$, let us set the following
parameters:%
\begin{eqnarray*}
\boldsymbol{\phi }^{\gamma } &\boldsymbol{=}&\left( 1-\gamma \right)
\boldsymbol{\phi }^{\ast }+\gamma \boldsymbol{\phi ,} \\
\boldsymbol{\theta }^{\gamma } &=&\left( 1-\gamma \right) \boldsymbol{\theta }^{\ast }+\gamma \boldsymbol{\theta ,}
\end{eqnarray*}%
then%
\begin{eqnarray*}
S_{\alpha }(\boldsymbol{\phi }^{\gamma },\boldsymbol{\theta }^{\gamma })
&=&\sum_{i=1}^{I_{x}}\left[ u_{f}(x_{i})-u(x_{i},t_{f};\boldsymbol{\phi }%
^{\gamma },\boldsymbol{\theta }^{\gamma })\right] ^{2}+\sum_{j=1}^{I_{t}}%
\left[ u^{\ast }(t_{j})-u(x^{\ast },t_{j};\boldsymbol{\phi }^{\gamma },%
\boldsymbol{\theta }^{\gamma })\right] ^{2} \\
&&+\alpha \sum_{i=1}^{I_{x}}\left[ \sum_{m=1}^{N_{x}}\theta _{m}^{\gamma
}x_{i}^{m-1}\right] ^{2}+\alpha \sum_{j=1}^{I_{t}}\left[ \sum_{k=1}^{N_{t}}%
\phi _{k}^{\gamma }t_{j}^{k-1}\right] ^{2}
\end{eqnarray*}%
Since $\boldsymbol{\phi }^{\ast },\boldsymbol{\theta }^{\ast }$ are optimal%
\begin{gather*}
\left. \frac{dS_{\alpha }(\boldsymbol{\phi }^{\gamma },\boldsymbol{\theta }%
^{\gamma })}{d\gamma }\right\vert _{\gamma =0}=-2\sum_{i=1}^{I_{x}}\left[
u_{f}(x_{i})-u(x_{i},t_{f};\boldsymbol{\phi }^{\ast },\boldsymbol{\theta }%
^{\ast })\right] \left. \frac{\partial u(x_{i},t_{f};\boldsymbol{\phi }%
^{\gamma },\boldsymbol{\theta }^{\gamma })}{\partial \gamma
}\right\vert _{\gamma =0} \\
-2\sum_{j=1}^{I_{t}}\left[ u^{\ast }(t_{j})-u(x^{\ast },t_{j};\boldsymbol{%
\phi }^{\ast },\boldsymbol{\theta }^{\ast })\right] \left. \frac{u(x^{\ast
},t_{j};\boldsymbol{\phi }^{\gamma },\boldsymbol{\theta }^{\gamma })}{%
\partial \gamma }\right\vert _{\gamma =0}\\
+2\alpha \sum_{i=1}^{I_{x}}\sum_{m=1}^{N_{x}}\theta _{m}\left( \theta _{m}-\theta _{m}^{\ast }\right) x_{i}^{2m-2}+2\alpha \sum_{j=1}^{I_{t}}\sum_{k=1}^{N_{t}}\phi _{k}\left( \phi _{k}-\phi _{k}^{\ast }\right) t_{j}^{2k-2}\geq 0.
\end{gather*}%
Let us denote $v\left( x,t\right) =\left. \frac{\partial u^{\gamma }(x,t)}{%
\partial \gamma }\right\vert _{\gamma =0}$ which is the solution of  $\left( %
\ref{stab1}\right) $ and using in the above inequality:%
\begin{gather*}
\left. \frac{dS_{\alpha }(\boldsymbol{\phi }^{\gamma },\boldsymbol{\theta }%
^{\gamma })}{d\gamma }\right\vert _{\gamma =0}=-2\sum_{i=1}^{I_{x}}\left[
u_{f}(x_{i})-u(x_{i},t_{f};\boldsymbol{\phi }^{\ast },\boldsymbol{\theta }%
^{\ast })\right] v\left( x_{i},t_{f}\right) +2\alpha \sum_{i=1}^{I_{x}}\sum_{m=1}^{N_{x}}\theta _{m}\left( \theta _{m}-\theta
_{m}^{\ast }\right) x_{i}^{2m-2} \\
-2\sum_{j=1}^{I_{t}}\left[ u^{\ast }(t_{j})-u(x^{\ast },t_{j};\boldsymbol{%
\phi }^{\ast },\boldsymbol{\theta }^{\ast })\right] v\left( x^{\ast },t_{j}\right) +2\alpha \sum_{j=1}^{I_{t}}\sum_{k=1}^{N_{t}}\phi _{k}\left( \phi _{k}-\phi _{k}^{\ast }\right) t_{j}^{2k-2}\geq 0,
\end{gather*}%
which is the required result.
\end{proof}

\section{Conjugate Gradient Method}

Here, we use the Conjugate Gradient Method which is an iterative process to
estimate the parameters $\left\{ \boldsymbol{\phi },\boldsymbol{\theta }%
\right\} $ by minimizing the cost function $S_{\alpha }(\boldsymbol{\phi },%
\boldsymbol{\theta })$ given in $\left( \ref{4}\right) .$ We establish the
new iterate $\left\{ \boldsymbol{\phi }^{\left( n+1\right) },\boldsymbol{%
\theta }^{\left( n+1\right) }\right\} $ from the previous iteration $\left\{ \boldsymbol{\phi }^{\left( n\right) },\boldsymbol{\theta }^{\left( n\right)
}\right\} $ as follows:%
\begin{eqnarray}
\boldsymbol{\phi }^{\left( n+1\right) } &=&\boldsymbol{\phi }^{\left(
n\right) }-\beta _{\phi }d_{\phi }^{\left( n\right) }  \label{18} \\
\boldsymbol{\theta }^{\left( n+1\right) } &=&\boldsymbol{\theta }%
^{\left( n\right) }-\beta _{\theta }d_{\theta }^{\left( n\right) } \label{19}
\end{eqnarray}%
where $\beta _{\phi },\beta _{\theta }$ are the search step size, $n$ shows the iteration number, $d_{\phi }^{\left( n\right) }$ is the direction of
descent for $\boldsymbol{\phi }$\ given as%
\begin{equation}
d_{\phi }^{\left( n\right) }=\left\{
\begin{array}{l}
\nabla _{\phi }S_{\alpha }(\boldsymbol{\phi }^{\left( n\right) },\boldsymbol{%
\theta }^{\left( n\right) }),\text{if }n=0 \\
\nabla _{\phi }S_{\alpha }(\boldsymbol{\phi }^{\left( n\right) },\boldsymbol{%
\theta }^{\left( n\right) })+\gamma _{\phi }^{\left( n\right) }d_{\phi
}^{\left( n-1\right) },\text{if }n>0%
\end{array}%
\right. ,  \label{20}
\end{equation}%
$d_{\theta }^{\left( n\right) }$ is the direction of descent for $%
\boldsymbol{\phi ,}$as well,i.e.%
\begin{equation}
d_{\theta }^{\left( n\right) }=\left\{
\begin{array}{l}
\nabla _{\theta }S_{\alpha }(\boldsymbol{\phi }^{\left( n\right) },%
\boldsymbol{\theta }^{\left( n\right) }),\text{if }n=0 \\
\nabla _{\theta }S_{\alpha }(\boldsymbol{\phi }^{\left( n\right) },%
\boldsymbol{\theta }^{\left( n\right) })+\gamma _{\theta }^{\left( n\right)
}d_{\theta }^{\left( n-1\right) },\text{if }n>0%
\end{array}%
\right. .  \label{21}
\end{equation}%
Here $\gamma _{\phi }^{\left( n\right) }$ and $\gamma _{\theta }^{\left( n\right) }$ are the conjugate coefficients obtained by the direction of descent $d_{\phi }^{\left( n\right) }$ and $d_{\theta }^{\left( n\right) }$
conjugated to the previous one $d_{\phi }^{\left( n-1\right) }$and $%
d_{\theta }^{\left( n-1\right) }$ ,respectively. In the literature, different kind of the conjugate coefficients can be found. The most common
conjugation is the Fletcher--Reeve's (FR) version \cite{Fletcher} given by:%
\begin{eqnarray}
\gamma _{\phi }^{\left( 0\right) } &=&0,~\gamma _{\phi }^{\left( n\right) }=%
\frac{\sum_{k=1}^{N_{t}}\left[ \frac{\partial S_{\alpha }(\boldsymbol{\phi }%
^{\left( n\right) },\boldsymbol{\theta }^{\left( n\right) })}{\partial \phi
_{k}}\right] ^{2}}{\sum_{k=1}^{N_{t}}\left[ \frac{\partial S_{\alpha }(%
\boldsymbol{\phi }^{\left( n-1\right) },\boldsymbol{\theta }^{\left(
n-1\right) })}{\partial \phi _{k}}\right] ^{2}},n>0,  \label{22} \\
\gamma _{\theta }^{\left( 0\right) } &=&0,~\gamma _{\theta }^{\left(
n\right) }=\frac{\sum_{m=1}^{N_{x}}\left[ \frac{\partial S_{\alpha }(%
\boldsymbol{\phi }^{\left( n\right) },\boldsymbol{\theta }^{\left( n\right)
})}{\partial \theta _{m}}\right] ^{2}}{\sum_{m=1}^{N_{x}}\left[ \frac{%
\partial S_{\alpha }(\boldsymbol{\phi }^{\left( n-1\right) },\boldsymbol{%
\theta }^{\left( n-1\right) })}{\partial \theta _{m}}\right] ^{2}},n>0. \label{23}
\end{eqnarray}%
The search step size $\beta _{\phi }$ appearing in equation (\ref{18}) is
obtained by minimizing the function $S_{\alpha }\left( \boldsymbol{\phi }%
^{\left( n+1\right) },\boldsymbol{\theta }^{\left( n\right) }\right) $ with respect to $\boldsymbol{\phi }^{\left( n+1\right) }$, that is, \
\begin{equation}
\beta _{\phi }=\frac{E_{\phi }}{F_{\phi }}  \label{24}
\end{equation}%
where%
\begin{gather}
E_{\phi }=\sum_{i=1}^{I_{x}}\left[ u\left( x_{i},t_{f};\boldsymbol{\phi }%
^{\left( n\right) },\boldsymbol{\theta }^{\left( n\right) }\right)
-u_{f}(x_{i})\right] u\left( x_{i},t_{f};d_{\phi }^{\left( n\right) },%
\boldsymbol{\theta }^{\left( n\right) }\right)   \notag \\
+\sum_{j=1}^{I_{t}}\left[ u\left( x^{\ast },t_{j};\boldsymbol{\phi }^{\left(
n\right) },\boldsymbol{\theta }^{\left( n\right) }\right) -u^{\ast }(t_{j})%
\right] u\left( x^{\ast },t_{j};d_{\phi }^{\left( n\right) },\boldsymbol{%
\theta }^{\left( n\right) }\right)\nonumber \\ +\alpha \sum_{j=1}^{I_{t}}\left( \sum_{k=1}^{N_{t}}\phi _{k}^{\left( n\right) }t_{j}^{k-1}\right) \left( \sum_{k=1}^{N_{t}}\left( d_{\phi }^{\left( n\right) }\right)
_{k}t_{j}^{k-1}\right)   \label{25} \\
F_{\phi }=\sum_{i=1}^{I_{x}}u^{2}\left( x_{i},t_{f};d_{\phi }^{\left( n\right) },\boldsymbol{\theta }^{\left( n\right) }\right)
+\sum_{j=1}^{I_{t}}u^{2}\left( x^{\ast },t_{j};d_{\phi }^{\left( n\right) },%
\boldsymbol{\theta }^{\left( n\right) }\right)\nonumber \\ +\alpha \sum_{j=1}^{I_{t}}%
\left[ \sum_{k=1}^{N_{t}}\left( d_{\phi }^{\left( n\right) }\right) _{k}t_{j}^{k-1}\right] ^{2}  \label{26}
\end{gather}%
and $\beta _{\theta }$ appearing in equation (\ref{19}) is obtained by minimizing the function $S_{\alpha }\left( \boldsymbol{\phi }^{\left( n\right) },\boldsymbol{\theta }^{\left( n+1\right) }\right) $ with respect to $\boldsymbol{\theta }^{\left( n+1\right) }$, that is, \
\begin{equation}
\beta _{\theta }=\frac{E_{\theta }}{F_{\theta }}  \label{27}
\end{equation}%
where%
\begin{gather}
E_{\theta }=\sum_{i=1}^{I_{x}}\left[ u\left( x_{i},t_{f};\boldsymbol{\phi }%
^{\left( n\right) },\boldsymbol{\theta }^{\left( n\right) }\right) -u_{f}(x_{i})\right] u\left( x_{i},t_{f};\boldsymbol{\phi }^{\left( n\right)
},d_{\theta }^{\left( n\right) }\right)   \notag \\
+\sum_{j=1}^{I_{t}}\left[ u\left( x^{\ast },t_{j};\boldsymbol{\phi }^{\left(
n\right) },\boldsymbol{\theta }^{\left( n\right) }\right) -u^{\ast }(t_{j})%
\right] u\left( x^{\ast },t_{j};\boldsymbol{\phi }^{\left( n\right) },d_{\theta }^{\left( n\right) }\right) \notag \\+\alpha \sum_{j=1}^{I_{t}}\left( \sum_{k=1}^{N_{t}}\phi _{k}^{\left( n\right) }t_{j}^{k-1}\right) \left( \sum_{k=1}^{N_{t}}\left( d_{\theta }^{\left( n\right) }\right)
_{k}t_{j}^{k-1}\right)   \label{28} \\
F_{\phi }=\sum_{i=1}^{I_{x}}u^{2}\left( x_{i},t_{f};\boldsymbol{\phi }%
^{\left( n\right) },d_{\theta }^{\left( n\right) }\right) +\sum_{j=1}^{I_{t}}u^{2}\left( x^{\ast },t_{j};\boldsymbol{\phi }^{\left(
n\right) },d_{\theta }^{\left( n\right) }\right)\notag \\ +\alpha \sum_{j=1}^{I_{t}}%
\left[ \sum_{k=1}^{N_{t}}\left( d_{\theta }^{\left( n\right) }\right) _{k}t_{j}^{k-1}\right] ^{2}  \label{29}
\end{gather}

\subsection{Algorithm model for iteratively gradient method:}

The computational procedure for the solution of this inverse problem (\ref%
{1_e})-(\ref{2_2}) using the conjugate gradient method is introduced as follows:

\begin{description}
\item[Step 1] Choose $\boldsymbol{\phi }^{\left( 0\right) },\boldsymbol{%
\theta }^{\left( 0\right) }$, and set $n=0$,

\item[Step 2] Calculate $u\left( x_{i},t_{f};\boldsymbol{\phi }^{\left( n\right) },\boldsymbol{\theta }^{\left( n\right) }\right) $, $i=1,2,...,I_{x}$ in (\ref{10}) and $u\left( x^{\ast },t_{j};\boldsymbol{\phi }^{\left( n\right) },\boldsymbol{\theta }^{\left( n\right) }\right)$, $j=1,2,....,I_{t}$ in (\ref{11}),

\item[Step 3] Compute $\frac{\partial S_{\alpha }(\boldsymbol{\phi }^{\left(
n\right) },\boldsymbol{\theta }^{\left( n\right) })}{\partial \phi _{k}}%
,k=1,2,...,N_{t}$ in (\ref{16}) and $\frac{\partial S_{\alpha }(\boldsymbol{%
\phi }^{\left( n\right) },\boldsymbol{\theta }^{\left( n\right) })}{\partial \theta _{m}},m=1,2,...,Nx$ in (\ref{17}) ,

\item[Step 4] Knowing $\frac{\partial S_{\alpha }(\boldsymbol{\phi }^{\left(
n\right) },\boldsymbol{\theta }^{\left( n\right) })}{\partial \theta _{m}}%
,m=1,2,...,Nx$ and $\frac{\partial S_{\alpha }(\boldsymbol{\phi }^{\left(
n\right) },\boldsymbol{\theta }^{\left( n\right) })}{\partial \phi _{k}}%
,k=1,2,...,N_{t},$ compute the conjugate coefficients $\gamma _{\phi }^{\left( n\right) }$ in (\ref{22}) and $\gamma _{\theta }^{\left( n\right) } $ in (\ref{23}),

\item[Step 5] Compute $d_{\phi }^{\left( n\right) }$ in (\ref{20}) and $%
d_{\theta }^{\left( n\right) }$ in (\ref{21}),

\item[Step 6] By setting $d_{\phi }^{\left( n\right) }$ instead of $%
\boldsymbol{\phi }^{\left( n\right) }$. calculate $u\left( x_{i},t_{f};d_{\phi }^{\left( n\right) },\boldsymbol{\theta }^{\left( n\right) }\right) ,i=1,2,...,I_{x}$ in (\ref{10}) and $u\left( x^{\ast },t_{j};d_{\phi }^{\left( n\right) },\boldsymbol{\theta }^{\left( n\right) }\right) ,j=1,2,....,I_{t}$ in (\ref{11}) and putting $d_{\theta }^{\left(
n\right) }$ instead of $\boldsymbol{\theta }^{\left( n\right) }$ calculate $%
u\left( x_{i},t_{f};\boldsymbol{\phi }^{\left( n\right) },d_{\theta }^{\left( n\right) }\right) ,i=1,2,...,I_{x}$ in (\ref{10}) and $u\left( x^{\ast },t_{j};\boldsymbol{\phi }^{\left( n\right) },d_{\theta }^{\left( n\right) }\right) ,j=1,2,....,I_{t}$ in (\ref{11})

\item[Step 7] Compute the step size $\beta _{\phi }$ form (\ref{24}) by
using (\ref{25}) and (\ref{26}) and the step size $\beta _{\theta }$ form (%
\ref{27}) by using (\ref{28}) and (\ref{29}),

\item[Step 8] Update $\boldsymbol{\phi }^{\left( n+1\right) },\boldsymbol{%
\theta }^{\left( n+1\right) }$ from (\ref{18}) and (\ref{19}),

\item[Step 9] Stop computing if the stopping criterion
\begin{equation*}
S_{\alpha }\left( \boldsymbol{\phi }^{\left( n+1\right) },\boldsymbol{\theta }^{\left( n+1\right) }\right) <\varepsilon
\end{equation*}%
is satisfied. Otherwise set $n=n+1$ and go to Step 2.
\end{description}

\subsection{Results}
Now using the above proposal algorithm we will reconstruct the unknown internal heat source $F(t)$ performed on the mesh $w_{h_t}:=\{t_j: t_0=0, t_j=jh_t, j=1,2,...,I_t, h_t=t_f/I_t\}$ and initial temperature $u_0(x)$ performed on the mesh $w_{h_x}:=\{x_i: x_0=0, x_i=ih_x, j=1,2,...,I_x, h_x=L/I_x\}$. We use the stopping criteria as $\varepsilon:=10^{-3}$. The root mean square errors are computed by $E_F=\sqrt{\frac{\sum_{j=0}^{I_t} \left(F(t_j)-F^{h_t}(t_j) \right)^2}{I_t}}$ and $E_{u_0}=\sqrt{\frac{\sum_{i=0}^{I_x} \left(u_0(x_i)-u_0^{h_x}(x_i) \right)^2}{I_x}}$ where $F(t_j)$ and $u_0(x_i)$ are exact solution of the inverse problem and $F^{h_t}(t_j)$ and $u_0^{h_x}(x_i)$ are numerical solution of the inverse problem.

\begin{example}\label{ex1} In the first numerical experiment we take
the exact solution $u(x,t)=(\sin(x)+1)\exp(-t),~(x,t)\in\left(-\frac{\pi}{2},\frac{3\pi}{2}\right)\times(0,2),$ with the internal heat source $F(t)=-\exp(-t)$ and initial temperature $u_0(x)=sin(x)+1$. Figure \ref{ex1:fig1}  shows the solutions obtained with the iterative algorithm. Table \ref{ex1:tab1}  gives the comparison of the results with respect to $N_t$, $N_x$ and the interior point $x^*$. It is also observed from Table \ref{ex1:tab1} that for $N_t=9$ and $N_x=12$ the lowest errors values of $E_F$ and $E_{u_0}$ are determined in case of $x^*=2.97$ which is close to the right boundary.
\begin{figure}[H]
  \begin{subfigure}[b]{0.5\textwidth}
          \centering
          \includegraphics[width=\textwidth]{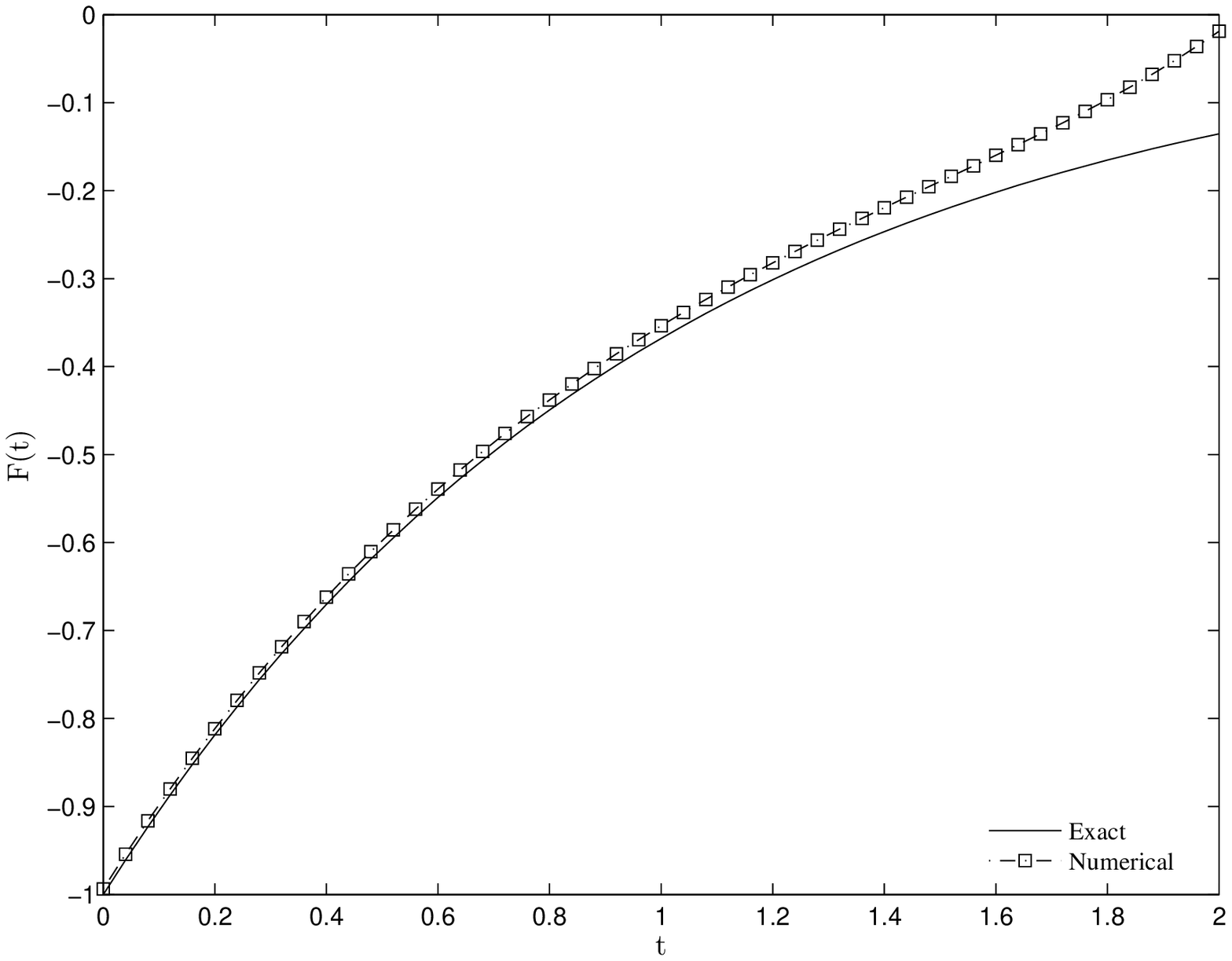}
          \caption{}{\label{ex1:fig1-a}}
  \end{subfigure}%
  \begin{subfigure}[b]{0.53\textwidth}
          \centering
          \includegraphics[width=\textwidth]{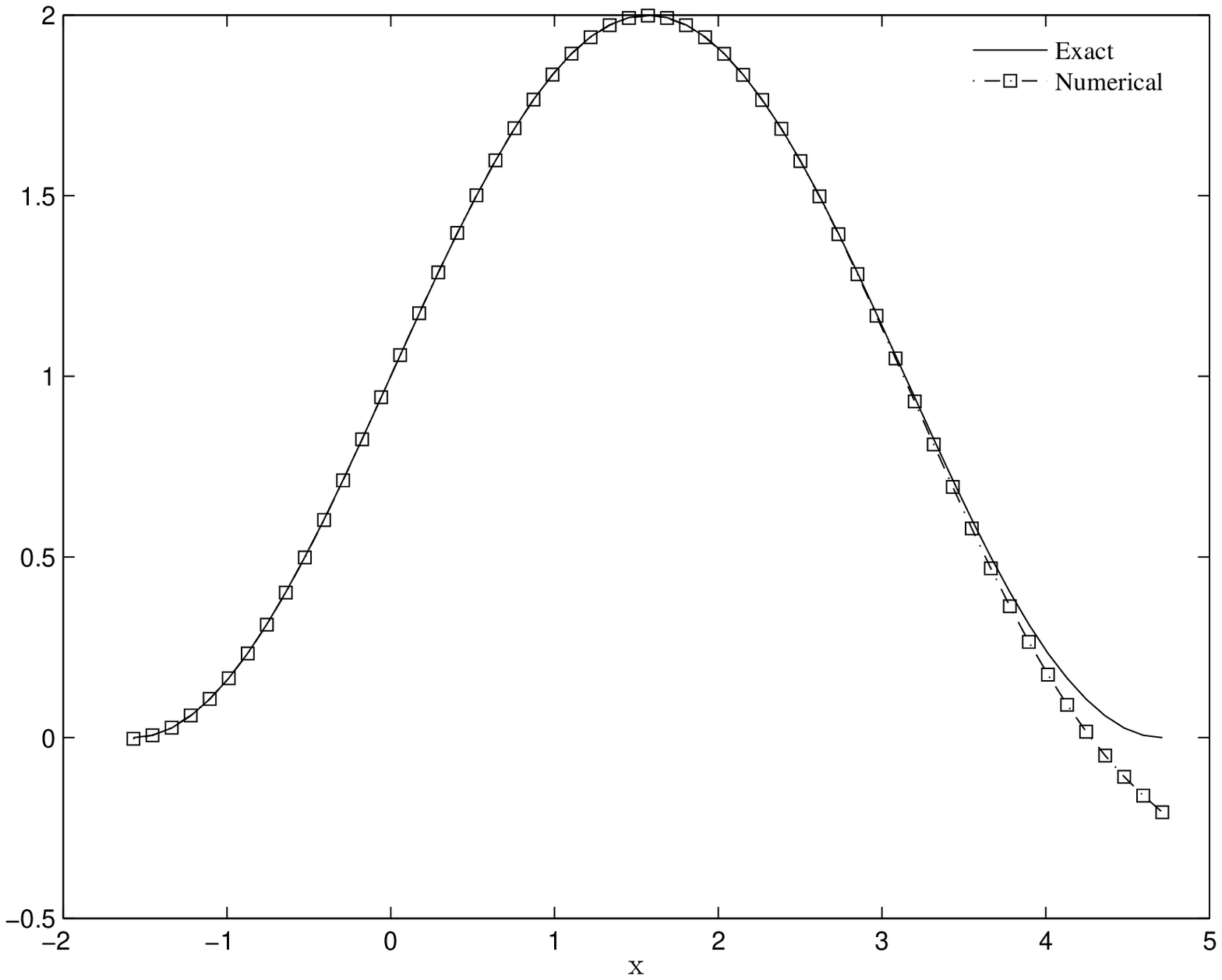}
          \caption{}{\label{ex1:fig1-b}}
  \end{subfigure}
  \caption{(\subref{ex1:fig1-a}) shows the analytical and numerical solutions
  of $F(t)$; (\subref{ex1:fig1-b}) presents the analytical and numerical solutions of $u_0(x)$
  when $N_t=9$, $N_x=12$ and $x^*=2.97$.}\label{ex1:fig1}
\end{figure}
\begin{table}[H]
\centering
\begin{tabular}{|c|c|c|c|}
  $N_x\times N_t$ & $x^*$ & $E_F$ & $E_{u_0}$
\\\hline \multirow{3}{*}{$6 \times 5$} & $-1.34$ & $8.09\times 10^{-3}$ & $7.14\times 10^{-2}$  \\
 \cline{2-2}\cline{3-2}\cline{4-2} & $-0.17$ & $7.62\times 10^{-3}$ & $7.12\times 10^{-2}$  \\
 \cline{2-2}\cline{3-2}\cline{4-2} & $0.99$ & $7.64\times 10^{-3}$ & $6.69\times 10^{-2}$ \\
 \cline{2-2}\cline{3-2}\cline{4-2} & $2.15$ & $7.59\times 10^{-3}$ & $5.82\times 10^{-2}$ \\
 \cline{2-2}\cline{3-2}\cline{4-2} & $2.97$ & $7.37\times 10^{-3}$ & $4.93\times 10^{-2}$ \\
 \hline \multirow{3}{*}{$12 \times 9$} & $-1.34$ & $7.44\times 10^{-3}$ & $5.23\times 10^{-2}$  \\
 \cline{2-2}\cline{3-2}\cline{4-2} & $-0.17$ & $6.53\times 10^{-3}$ & $4.82\times 10^{-2}$  \\
 \cline{2-2}\cline{3-2}\cline{4-2} & $0.99$ & $6.54\times 10^{-3}$ & $4.55\times 10^{-2}$ \\
 \cline{2-2}\cline{3-2}\cline{4-2} & $2.15$ & $5.25\times 10^{-3}$ & $3.78\times 10^{-2}$ \\
 \cline{2-2}\cline{3-2}\cline{4-2} & $2.97$ & $5.03\times 10^{-3}$ & $2.98\times 10^{-2}$ \\
 \hline
\end{tabular}
\caption{Errors with various values of $N_t$, $N_x$ and $x^*$ for Example \ref{ex1}.}\label{ex1:tab1}
\end{table}

\end{example}

\newpage
\bibliographystyle{model1b-num-names}
\bibliography{manuscript_ref}

\end{document}